\font \Bbbten=msbm10 \font \Bbbsev=msbm7 \font \Bbbfiv=msbm5
\newcommand{\N}{\mbox{$I\!\!N$}}
\newcommand{\Z}{\mbox{$Z\!\!\!Z$}}
\newcommand{\R}{\mbox{$I\!\!R$}}
\newcommand{\dist}{{\rm dist}}
\newcommand{\dif}{{\rm Diff}}
\newcommand{\diam}{{\rm diam} }
\newcommand{\cita}[7]{{\sc #1, }{\it #2, }{\small #3, {\bf #4 } (#5), p.
#6-#7.}}
\newcommand{\cit}[5]{{\sc #1, }{\it #2, }{\small #3, {\bf #4 } #5.}}
\def\cC{{\mathcal C}}
\newtheorem{maintheorem}{Theorem}
\newtheorem{Df}{Definition}[section]
\newtheorem{Teo}{Theorem}[section]
\newtheorem{Lem}[Teo]{Lemma}
\newtheorem{Prop}[Teo]{Proposition}
\newtheorem{Obs}[Teo]{Remark}
\title{On measure expansive diffeomorphisms.}
\author{
M. J. Pacifico\footnote{partially supported by CNPq Brazil, Pronex on Dynamical Systems,
FAPERJ, Balzan Research Project of J.Palis},
J. L. Vieitez\footnote{partially supported by Grupo de Investigaci\'on "Sistemas Din\'amicos" CSIC (Universidad de la
Rep\'ublica), SNI-ANII, PEDECIBA, Uruguay}}
\date{\today}
\begin{document}
\maketitle
\begin{abstract}
Let $f: M \to M$ be a diffeomorphism defined on a
compact boundaryless $d$-dimensional manifold $M$, $d\geq 2$. C. Morales has proposed the notion of measure expansiveness. In this note we show that diffeomorphisms in a residual subset far from homoclinic tangencies are measure expansive. We also show that surface diffeomorphisms presenting  homoclinic tangencies can be $C^1$-approximated by non-measure expansive diffeomorphisms.
\end{abstract}


%

\tableofcontents

\section{Introduction}
The notion of expansiveness was introduced by Utz in the middle of the twentieth century, see \cite{Ut}.
Roughly speaking a system is expansive if two orbits cannot
remain close to each other under the action of the system.
This notion is very important in the context of the theory of Dynamical Systems.
For instance, it is responsible for  many chaotic properties for homeomorphisms defined on compact spaces, see for instance \cite{Hi}, \cite{Le}, \cite{Ft}, \cite{Vi} for more on this.
There is an extensive literature concerning expansive systems and a classical result
establishes that  every hyperbolic $f$-invariant subset $\Lambda\subset M$ is expansive.

As pointed out by Morales \cite{Mo}, in light of  the rich consequences of expansiveness
in the dynamics of a system,
it is natural to consider another notions of expansiveness. In this same paper
he  introduced a notion generalizing the usual concept of expansiveness.

In this paper we prove that there is a residual subset $\mathcal{G}$ of $\dif^1(M)\backslash \overline{\{\mathcal{H}\mathcal{T}\}}$
such that if $f\in\mathcal{G}$ then $f$ is $\mu$-expansive (see Definition \ref{mu-expansivo}). Here $\mathcal{H}\mathcal{T}$ is the subset of $\dif^1(M)$ presenting a homoclinic tangency (see Definition \ref{homtang}).

Moreover we also show that surface diffeomorphisms presenting  homoclinic tangencies associated to hyperbolic periodic points can be $C^1$-approximated by non measure-expansive diffeomorphisms.

\section{Preliminary results and statement of the main result}

Let us start with the different definitions of expansiveness we shall  deal with.
To this end we define for $x\in X$, where $(X,d)$ is a compact metric space, the set
\begin{equation} \label{Gama}
\Gamma_\epsilon(x,f)\equiv \{y\in X\,/\,
d(f^n(x),f^n(y))\leq \epsilon,\, n\in\Z\}\, .
\end{equation}

 We simply
write $\Gamma_\epsilon(x)$ instead of $\Gamma_\epsilon(x,f)$ when
it is understood  which $f$ we refer to.


\subsection{Expansiveness and robust expansiveness.}

\begin{Df} \label{expansivo}
Let $f:X\to X$ be a homeomorphism defined on a compact metric space $(X,d)$.
 We say that $f$ is an expansive homeomorphism if there is $\alpha>0$ such that $\Gamma_\alpha(x)=\{x\}$ for all $x\in X$.
 Equivalently, given $x,y\in X$, $x\neq y$, there is $n\in\Z$ such that $\dist(f^n(x),f^n(y))>\alpha$.
\end{Df}

For $f$  a diffeomorphism one is interested in the relation between a given property
in the underlying dynamics and its influence on the dynamics on the infinitesimal level,
i. e., in the dynamics of the tangent map $Df: TM\to TM$.
Usually one cannot expect that a sole notion on the underlying dynamics can guarantee
any interesting feature on the infinitesimal level.
Hence we ask for a robust property valid in a whole neighborhood of $f\in \dif^r(M)$, $r\geq 1$.

\begin{Df} \label{robustinha}
A compact $f$-invariant subset  $\Lambda$ is $C^r$-robustly expansive, $r\geq 1$, if and only if there exists
a $C^r$-neighbourhood ${\cal U}(f)$ of $f$ such that for all
$g\in {\cal U}(f)$, there exists a continuation of $\Lambda_g$,
such that $g|_{\Lambda_g}$ is expansive.
\end{Df}

We prove at \cite{PPV,PPSV, SV} that when $\Lambda=H(p,f)$ is a robustly $C^1$-expansive homoclinic class associated to a hyperbolic periodic point $p$ then $H(p,f)$ is hyperbolic (see Subsection \ref{part-hip}).

\medbreak

\subsection{Entropy expansiveness and robust entropy expansiveness.}

Another notion of expansiveness introduced by Bowen at \cite{Bo} is that of an entropy expansive homeomorphism $f:M\to M$, or $h$-expansive homemorphism for short.

Let $K$ be a compact invariant subset of $M$ and $\dist:M\times
M\to\R^+$ a distance in $M$ compatible with its Riemannian
structure. For $E,F \subset K$, $n\in\N$ and $\delta>0$ we say that
$E$ $(n,\delta)$-spans $F$ with respect to $f$ if for each $y\in F$
there is $x\in E$ such that $\dist(f^j(x),f^j(y))\leq \delta$ for
all $j=0,\ldots ,n-1$. Let $r_n(\delta,F)$ denote the minimum
cardinality of a set that $(n,\delta)$-spans $F$. Since $K$ is
compact $r_n(\delta,F)<\infty$. We define
$$h(f,F,\delta)\equiv \lim\sup_{n\to\infty}\frac{1}{n}\log(r_n(\delta,F))$$
and the topological entropy of $f$ restricted to $F$ as
$$h(f,F)\equiv \lim_{\delta\to 0}h(f,F,\delta)\, .$$
The last limit exists since $h(f,F,\delta)$ increases as
$\delta$ decreases to zero.

\begin{Df} \label{gammadeepsilon}
We say that $f/K$ is {\em
entropy-expansive} or {\em $h$-expansive} for short, if and only if
there exists $\epsilon>0$ such that
$$h^*_f(\epsilon)\equiv \sup_{x\in K}h(f,\Gamma_\epsilon(x)) = 0\, .$$
\end{Df}

As for the case of expansiveness we may define a notion of robust $h$-expansiveness.

\begin{Df} \label{robuh}
If $f:M\to M$ is a $C^r$-diffeomorphism , $r\geq 1$, and $K\subset
M$ is compact invariant, we say that $f/K$ is robustly $C^1$-entropy
expansive if there is a $C^1$-neighborhood $\mathcal{U}$ of $f$ and
an open set $U\supset K$ such that if $g\in \mathcal{U}$ then there
is $K_g\subset U$ such that $g/K_g$ is entropy expansive. We say
that $K_g$ is a continuation of $K$ (not necessarily unique).
\end{Df}

%

We prove at \cite{PaVi,PaVi2,DFPV} that if $K$ is a homoclinic class  $H(p,f)$ associated to a hyperbolic periodic point $p$ then it is robustly $h$-expansive if and only if it admits a finest dominated splitting
$$T_{H(p)}M=E\oplus F_1\oplus\cdots \oplus F_k\oplus G$$
with $F_j$ one dimensional sub-bundles, $E$ uniformly contracting and $G$ uniformly expanding.

Other class of robust entropy expansive diffeomorphims is that of Morse-Smale diffeomorphisms. Indeed, all of them have topological entropy zero in a robust way.

\subsection{Domination, partial hyperbolicity, hyperbolicity.} \label{part-hip}
Recall the notion of a dominated splitting for a compact $f$-invariant subset $\Lambda\subset M$ of a diffeomorphism $f:M\to M$\,. It can be seen as a weak form of hyperbolicity.

\begin{Df}
\label{domi}
We say that a compact $f$-invariant set $\Lambda\subset M$ admits a
dominated splitting if the tangent bundle $T_{\Lambda}M$ has a
continuous $Df$-invariant splitting $E\oplus F$ and there exist
$C>0,\, 0<\lambda <1$, such that

\begin{equation*} \label{domino}
\|Df^n|E(x)\|\cdot\|Df^{-n}|F(f^n(x))\|\leq C\lambda^n
\;\forall x\in \Lambda,\, n\geq 0.
\end{equation*}

When the dominated splitting can be written as a sum
\begin{equation} \label{deco}
T_\Lambda M=E_1\oplus\cdots \oplus E_j\oplus E_{j+1}\oplus\cdots\oplus E_k
\end{equation}
we say that this sum is dominated if for all $j$ the sum
$$(E_1\oplus\cdots \oplus E_j)\oplus (E_{j+1}\oplus\cdots\oplus E_k)$$
is dominated.
\end{Df}

 If we cannot decompose in a non-trivial way any sub-bundle $E_j$ appearing at equation (\ref{deco}) we say that it is the {\em  finest} dominated splitting.



  Next we define partial hyperbolicity and hyperbolicity.
 \begin{Df}
 We say that a compact $f$-invariant set $\Lambda\subset M$ is partially hyperbolic
 if the tangent bundle $T_{\Lambda}M$ has a dominated splitting
 $E^s\oplus F\oplus E^u$ and there exist
$C>0,\, 0<\lambda <1$, such that for all vectors $v\in E^s$ we have $\|Df^n(v)\|\leq C\lambda^n \|v\|$ for all $n\geq 0$ and
 for all vectors $v\in E^u$ we have $\|Df^{-n}(v)\|\leq C\lambda^n \|v\|$ for all $n\geq 0$. Vectors in $F$ are less expanded
 than vectors in $E^u$ and less contracted than vectors in $E^s$ (this follows from domination).
\end{Df}

\begin{Obs}
 In case that the central sub-bundle $F$ is trivial, we say that $\Lambda$ is {\em hyperbolic}.
\end{Obs}


\subsection{Measure expansiveness.}


 Next we introduce the notion of measure expansiveness given by Morales.

\begin{Df}[see \cite{Mo}] \label{mu-expansivo}
Let $f:X\to X$ be a homeomorphism defined on a compact metric space $(X,d)$ and $\mu$ a non-atomic
probability measure defined on $X$ (not necessarily $f$-invariant). We say that $f$ is a $\mu$-expansive homeomorphism if there is $\alpha>0$ such that $\mu(\Gamma_\alpha(x))= 0$ for all $x\in X$. Here $\Gamma_\alpha(x)$ is the set defined at equation (\ref{Gama}).
\end{Df}

We will show that $C^1$ generically diffeomorphisms far away from homoclinic tangencies are measure expansive. To that end we recall the definition of homoclinic tangencies.

\begin{Df} \label{homtang}
 A diffeomorphism $f : M \to M$ exhibits a homoclinic tangency if
there is a hyperbolic periodic orbit $\mathcal{O}$ whose invariant manifolds $W^s (\mathcal{O})$ and $W^u (\mathcal{O})$
have a non transverse intersection.
\end{Df}

We set $\mathcal{H}\mathcal{T}$ for the subset of $\dif^1{M}$ constituted of
diffeomorphisms presenting a homoclinic tangency.
Given a subset $A$ of $\dif^1{M}$ we use the notation $\overline{A}$ for the closure of $A$
in $\dif^1{M}$.
\vspace{0.2cm}

The main results in this paper are the following theorems:

\begin{maintheorem} \label{Teo C}
Let $f:M\to M$ be a $C^1$-diffeomorphism defined on a compact manifold $M$.
There is a $\mathcal{G}$ residual subset of $\dif^1{M}\backslash \overline{\mathcal{H}\mathcal{T}}$
such that for any Borel probability measure $\mu$ (invariant by $f$ or not)
we have that there is $\delta>0$ such that $\mu(\Gamma_\delta(x))=0$ for all $x\in M$.
In particular $f$ is $\mu$-expansive.
\end{maintheorem}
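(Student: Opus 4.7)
The approach is to exploit the $C^1$-generic structure available in the open set $\mathcal{U}:=\dif^1(M)\setminus\overline{\mathcal{H}\mathcal{T}}$. Within $\mathcal{U}$ there is, by combining Kupka--Smale genericity with results of Wen, Gan--Yang and Crovisier that apply precisely because tangencies are forbidden, a residual subset $\mathcal{G}_0\subset\mathcal{U}$ such that for every $f\in\mathcal{G}_0$ the chain recurrent set $R(f)$ is a disjoint union of finitely many chain recurrence classes, each of which is a homoclinic class $H(p_i,f)$ of a hyperbolic periodic point and admits a finest dominated splitting
\[
T_{H(p_i)}M=E^s\oplus F_1\oplus\cdots\oplus F_{k_i}\oplus E^u,
\]
with $E^s$ uniformly contracting, $E^u$ uniformly expanding and every $F_j$ one-dimensional. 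By the authors' earlier work cited above (\cite{PaVi,PaVi2,DFPV}) each such $f$ is also $h$-expansive. The candidate residual set is $\mathcal{G}:=\mathcal{G}_0$.

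The first step is to localise. Outside a small neighbourhood $U$ of $R(f)$ the dynamics is gradient-like, so for $\delta$ smaller than a uniform constant one has $\Gamma_\delta(x)=\{x\}$ whenever $x\notin U$, and non-atomicity of $\mu$ then yields $\mu(\Gamma_\delta(x))=0$ at such $x$. Inside $U$, the dominated splitting on each $H(p_i,f)$ gives local centre-stable plaques $W^{cs}_\delta(x)$ tangent to $E^s\oplus F_1\oplus\cdots\oplus F_{k_i}$ and local centre-unstable plaques $W^{cu}_\delta(x)$ tangent to $F_1\oplus\cdots\oplus F_{k_i}\oplus E^u$; uniform hyperbolicity of $E^s$ and $E^u$ then yields
\[
\Gamma_\delta(x)\subset W^{cs}_\delta(x)\cap W^{cu}_\delta(x)
\]
once $\delta$ is smaller than a uniform plaque size. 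This reduces matters to controlling the $\mu$-mass of a compact invariant piece of topological dimension at most $\sum k_i$, sitting inside a central plaque whose dynamics is one-dimensional block by block.

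The second step promotes this topological smallness to $\mu$-nullity. For a non-atomic $\mu$ one has $\bigcap_{\delta>0}\Gamma_\delta(x)=\{x\}$, and the map $x\mapsto \mu(\Gamma_\delta(x))$ is upper semicontinuous (because $\limsup_{x_n\to x}\Gamma_\delta(x_n)\subset\Gamma_\delta(x)$, and by Fatou's lemma). A finite-cover compactness argument on $M$ then gives, for every $\varepsilon>0$, a uniform $\delta>0$ with $\mu(\Gamma_\delta(x))<\varepsilon$ for every $x$. To upgrade uniform smallness to the exact equality $\mu(\Gamma_\delta(x))=0$ I would exploit the one-dimensional central dynamics combined with the absence of tangencies: a positive-$\mu$ bi-infinite shadow inside a one-dimensional central bundle would allow, by a Franks/Gourmelon $C^1$-perturbation along the periodic point $p_i$, the creation of a homoclinic tangency for an arbitrarily small perturbation of $f$, contradicting $f\in\mathcal{U}$.

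The hardest step is that last passage from $\mu$-small to $\mu$-null. Entropy expansiveness, which is already available, only controls the topological complexity along $\Gamma_\delta(x)$ and does not by itself prevent a non-atomic measure from concentrating on a centre plaque. The ban on tangencies must therefore be used in a genuinely measure-theoretic way, through a perturbation that turns a positive-mass central shadow into a forbidden tangency configuration; making this work cleanly, especially when some $k_i>1$ and the central plaque is not one-dimensional, is where the technical labour concentrates.
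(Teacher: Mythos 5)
Your skeleton is the same as the paper's: restrict to the Crovisier--Sambarino--Yang residual set so that every chain class is partially hyperbolic with one-dimensional centre bundles, build locally invariant plaque families, and use the uniform contraction/expansion of $E^s$ and $E^u$ to confine $\Gamma_\delta(x)$ to a centre plaque. Two of your steps, however, do not survive scrutiny. First, the localisation claim that $\Gamma_\delta(x)=\{x\}$ for $x$ outside a neighbourhood $U$ of $R(f)$ is false: for a north--south diffeomorphism of $S^2$ (Morse--Smale, hence in $\dif^1(M)\setminus\overline{\mathcal{H}\mathcal{T}}$ together with a whole $C^1$-neighbourhood), every wandering point $x$ has $\Gamma_\delta(x)$ containing an open neighbourhood of $x$, because nearby orbits converge together to the sink forward and to the source backward. ``Gradient-like'' dynamics does not separate orbits; the paper instead handles wandering $x$ by pushing $f^n(x)$ into a small neighbourhood of $\omega(x)$, $f^{-n}(x)$ into one of $\alpha(x)$, and re-running the plaque argument over those limit classes. (A smaller inaccuracy: far from tangencies one cannot assume finitely many classes, all homoclinic --- the cited theorem explicitly allows aperiodic classes --- but this is harmless since the argument is local to each class.)

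The decisive gap is your last step. Confining $\Gamma_\delta(x)$ to a $k$-dimensional centre plaque does not yield $\mu(\Gamma_\delta(x))=0$ for an \emph{arbitrary} non-atomic Borel measure: every uncountable compact set carries a non-atomic probability measure, so the statement quantified over all such $\mu$ is equivalent to the countability of $\Gamma_\delta(x)$, and nothing in your argument delivers that. Your semicontinuity-plus-compactness step only gives $\mu(\Gamma_\delta(x))<\varepsilon$ uniformly in $x$, which is true for every homeomorphism and every non-atomic measure and uses no dynamics at all. The proposed rescue --- converting a ``positive-mass central shadow'' into a homoclinic tangency by a Franks/Gourmelon perturbation --- is not an argument as stated: positive $\mu$-mass for an arbitrary $\mu$ carries no dynamical information beyond uncountability, you exhibit no geometric configuration (no pair of invariant manifolds, no periodic orbit with controlled centre exponent) to which such a perturbation could be applied, and no mechanism is given for producing a non-transverse intersection from it. You have correctly located the crux --- and it is only fair to note that the paper's own proof is essentially silent at the same point, deriving $\Gamma_\delta(x)\subset\widehat W^{cs,k}(x)\cap\widehat W^{cu,\cdot}(x)$ and then simply asserting $\mu(\Gamma_\delta(x))=0$, which is immediate only when the centre bundle is trivial --- but the sketch you offer would not close it.
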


\begin{maintheorem} \label{Teo D}
Let $f:M\to M$ be a $C^1$-diffeomorphism defined on a
compact surface $M$ having a homoclinic tangency associated to a hyperbolic periodic orbit $\mathcal{O}$.
Then there is an arbitrarily small $C^1$-perturbation of $f$ giving a diffeomorphism $F:M\to M$ which is not measure-expansive.
\end{maintheorem}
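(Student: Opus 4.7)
The plan is to construct, by an arbitrarily $C^1$-small perturbation $F$ of $f$, a non-trivial arc $\sigma \subset M$ consisting of fixed points for some iterate $F^N$; the normalized arc-length $\mu$ on $\sigma$ will then be a non-atomic Borel probability for which $F$ is not $\mu$-expansive at every scale, yielding Theorem D. The first and technically hardest step is to produce a non-hyperbolic periodic point. Let $q \in W^s(\mathcal{O}) \cap W^u(\mathcal{O})$ be the tangency point; the $\lambda$-Lemma at $q$ yields, for each large $N$, $f$-periodic orbits of period $N$ shadowing the homoclinic loop $\mathcal{O} \cup \mathrm{orb}(q)$. Combined with Franks' Lemma (freely adjusting the derivative along a periodic orbit up to a $C^1$-small perturbation), this produces $f_1$ arbitrarily $C^1$-close to $f$ with a periodic point $p'$ of period $N$ such that $Df_1^N(p')$ has eigenvalues $1$ (along a center direction $W^c$) and $\kappa$ with $|\kappa| \neq 1$ (using $\dim M = 2$). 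Thus $p'$ is a saddle-node for $f_1^N$ with a $C^1$ center manifold $W^c(p')$.

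The second step flattens $f_1^N$ along an arc of $W^c(p')$. Writing $c_j := Df_1(f_1^j(p'))|_{W^c}$, the relation $c_0 c_1 \cdots c_{N-1} = 1$ forces $Df_1^{-(N-1)}(p')|_{W^c} = c_0 = Df_1(p')|_{W^c}$, so in suitable local coordinates $\delta(x) := f_1^{-(N-1)}(x) - f_1(x)$ vanishes to first order at $p'$ along $W^c$, i.e.\ $\delta(x) = O(|x-p'|^2)$. Pick a bump function $\phi$, equal to $1$ on $B(p', r/2)$ and supported in $B(p', r)$, with $B(p',r)$ disjoint from $\{f_1^j(p') : 1 \le j \le N-1\}$. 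Define $F := f_1 + \phi \cdot \delta$ in this chart (and $F := f_1$ elsewhere). Then $\|F - f_1\|_{C^1} = O(r)$, arbitrarily small, and for $x \in \sigma := W^c(p') \cap B(p', r/2)$ we have $F(x) = f_1^{-(N-1)}(x)$; since the intermediate points $\{f_1^{-(N-j)}(x)\}_{j=1}^{N-1}$ remain close to $\{f_1^j(p')\}_{j=1}^{N-1}$, hence outside $B(p',r)$, one obtains $F^j(x) = f_1^{-(N-j)}(x)$ for $j = 1, \ldots, N$, and in particular $F^N|_\sigma = \mathrm{id}$.

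The third step verifies non-measure-expansiveness. Equip $\sigma$ with its normalized arc-length measure $\mu$, a non-atomic Borel probability on $M$, and set $L := \max_{0 \le j < N} \sup_M \|DF^j\|$. For every $\alpha > 0$, $x \in \sigma$, and $y \in \sigma$ with $d(x,y) \le \alpha/L$, writing $n = kN + j$ with $0 \le j < N$ and using $F^N|_\sigma = \mathrm{id}$ gives $d(F^n(x), F^n(y)) = d(F^j(x), F^j(y)) \le L\, d(x,y) \le \alpha$; hence $\Gamma_\alpha(x) \supseteq \sigma \cap B(x, \alpha/L)$ has positive $\mu$-measure for every $\alpha > 0$, so $F$ is not $\mu$-expansive and in particular not measure-expansive. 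The main obstacle is Step 1: producing by a $C^1$-small perturbation a neutral periodic point out of the tangency. This requires the combination of the $\lambda$-Lemma with Franks' Lemma, together with delicate derivative bookkeeping along a long shadowing orbit --- a standard but technical part of the $C^1$-perturbation machinery available near homoclinic tangencies on surfaces.
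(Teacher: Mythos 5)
Your overall strategy --- producing an arc $\sigma$ of fixed points of $F^N$ and using normalized arc-length on it as the witness measure --- is genuinely different from the paper's, which instead perturbs the tangency into a sequence of small horseshoes and then fattens them, following Bowen and Robinson--Young, into horseshoes $\Lambda_n$ of positive Lebesgue measure with diameters $2r_n\to 0$, so that $\Gamma_{2r_n}(p_n)\supset \Lambda_n$ has positive Lebesgue measure. Your Step 3 is correct. But Step 2, as written, fails. The map $\delta(x)=f_1^{-(N-1)}(x)-f_1(x)$ is \emph{not} $O(|x-p'|^2)$ on a neighbourhood of $p'$: one computes
$$D\delta(p')=Df_1(p')\circ\left((Df_1^N(p'))^{-1}-I\right),$$
which vanishes on the eigendirection of the eigenvalue $1$ but acts as $(\kappa^{-1}-1)\neq 0$ (composed with $Df_1(p')$) on the $\kappa$-eigendirection. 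Hence $\|D(\phi\,\delta)(p')\|=\|D\delta(p')\|$ is bounded below independently of $r$, so $F=f_1+\phi\,\delta$ is \emph{not} a $C^1$-small perturbation of $f_1$ (and need not even be a diffeomorphism); the quadratic estimate you invoke holds only for $x\in W^c(p')$. The standard repair is to perturb by the center displacement extended trivially in the transverse direction: in coordinates $(u,v)$ with $W^c(p')=\{v=0\}$ and $f_1^N(u,0)=(g(u),0)$, $g(u)-u=O(u^2)$, set $h=\mathrm{id}+\psi(v)\,\phi(u)\,(g^{-1}(u)-u,0)$ and $F=f_1\circ h$; this has $C^1$-distance $O(r)$ from $f_1$ and still gives $F^N=\mathrm{id}$ on $\sigma$.

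Step 1 also carries the real weight of the argument and your justification for it is not adequate. The $\lambda$-Lemma requires a transverse intersection, and, more seriously, Franks' Lemma moves each Lyapunov exponent of a period-$N$ orbit by at most $\log(1+\epsilon)$ \emph{per iterate}; to reach eigenvalue exactly $1$ you must first exhibit periodic orbits whose weakest exponent is already below $\log(1+\epsilon)$. Periodic orbits shadowing a transverse homoclinic loop of a hyperbolic saddle are uniformly hyperbolic, so this does not come for free: the tangency must be used quantitatively (the transition map at the tangency sends the unstable direction into the stable one, which is what produces orbits with arbitrarily weak exponents). That analysis is exactly what you defer, and without it the proof is incomplete; the fact you need (on surfaces, a homoclinic tangency can be $C^1$-perturbed to create a non-hyperbolic periodic point) is true but requires a proof or a precise reference. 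By contrast, the paper's route avoids eigenvalue considerations entirely: it flattens the tangency into an arc contained in $W^s\cap W^u$, extracts from it horseshoes with arbitrarily small orbit diameters, and fattens them to positive Lebesgue measure.
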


\section{Proof of Theorem \ref{Teo C}.}

We start stating some results proved elsewhere that will be used in the proof.
\medbreak

Let $X= \dif^1(M)\backslash \overline{\mathcal{H}\mathcal{T}}$.
The following result is Theorem 1.1 of \cite{CSY}
\begin{Teo} \label{CSY}
The diffeomorphisms $f$ in a dense $\mathcal{G}_\delta$ subset $\mathcal{G} \subset Diff^1(M) \backslash \overline{\mathcal{H}\mathcal{T}}$
has the following properties.
\begin{enumerate}
\item
 Any aperiodic class $\mathcal{C}$ is partially hyperbolic with a one-dimensional cen-
tral bundle. Moreover, the Lyapunov exponent along $E^c$ of any invariant
measure supported on $\mathcal{C}$ is zero.
\item
 Any homoclinic class $H(p)$ has a partially hyperbolic structure
$$T_{\mathcal{C}}M = E^s \oplus E^c_1\oplus\cdots\oplus E^c_k\oplus E^u\ ,.$$
Moreover the minimal stable dimension of the periodic orbits of $H(p)$ is
$dim(E^s)$ or $dim(E^s) + 1$. Similarly the maximal stable dimension of the
periodic orbits of H(p) is $dim(E^s) + k$ or $dim(E^s) + k - 1$. For every
$i$, $1 \leq i \leq k$ there exist periodic points in $H(p)$ whose Lyapunov exponent
along $E^c_i$ is arbitrarily close to 0.
 In particular if $f\in\mathcal{G}$ then $f$ is partially hyperbolic.
\end{enumerate}
\end{Teo}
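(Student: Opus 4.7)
The plan is to piece together the standard $C^1$-generic toolbox (Kupka--Smale, continuous Hausdorff dependence of chain-recurrence classes, Ma\~n\'e's ergodic closing lemma, Bonatti--Crovisier connecting lemma, density of periodic orbits in homoclinic classes), Franks'-lemma-type perturbations in the refined form due to Gourmelon, and the Bonatti--D\'\i az--Pujals--Ures dichotomy for dominated splittings, and to exploit the hypothesis $f\notin\overline{\mathcal{HT}}$ as the obstruction that forbids creating tangencies by $C^1$-perturbation.

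First I would define $\mathcal{G}\subset \dif^1(M)\setminus\overline{\mathcal{HT}}$ as the residual subset on which all of the above generic properties hold simultaneously and on which chain-recurrence classes are either aperiodic or coincide with a homoclinic class $H(p,f)$. Step~1: on any chain-recurrence class $\mathcal{C}$, produce a dominated splitting by a Wen-type argument: if a dominated splitting of some relevant index is absent, Franks'/Gourmelon's lemma allows one to perturb the derivative along a periodic orbit into a form with a weak or complex eigenvalue pair, and then a further $C^1$-small perturbation creates a homoclinic tangency, contradicting $f\in\dif^1(M)\setminus\overline{\mathcal{HT}}$. This produces the finest dominated splitting $T_{\mathcal{C}}M=E_1\oplus\cdots\oplus E_\ell$. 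Step~2: the extremal bundles must be uniformly contracting/expanding: if, say, $E_1$ failed to be uniformly contracting, a Ma\~n\'e--Pliss extraction would yield a sequence of periodic orbits along which a suitable Gourmelon perturbation again creates a tangency. The same circle of ideas shows that every non-extremal sub-bundle $E_i$ in $\mathcal{G}$ has dimension one, otherwise one could perturb to obtain two equal moduli inside $E_i$ at a periodic point and again realize a tangency away from $\overline{\mathcal{HT}}$.

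Step~3 (aperiodic classes): Combining Step~2 with the ergodic closing lemma, if the central bundle had more than one factor or if the extremal bundles were non-trivial and the central bundle missing, one could approximate an ergodic invariant measure on $\mathcal{C}$ by periodic orbits lying in $\mathcal{C}$, contradicting aperiodicity; hence the splitting has exactly one central direction $E^c$ of dimension one. For any $f$-invariant measure $\mu$ supported on $\mathcal{C}$, if the Lyapunov exponent along $E^c$ were non-zero, Ma\~n\'e's ergodic closing lemma would produce periodic approximations with non-zero central exponent, hence hyperbolic periodic points inside $\mathcal{C}$, again contradicting aperiodicity; so the exponent must vanish. Step~4 (homoclinic classes): by density of periodic orbits in $H(p,f)$ and continuity of the dominated splitting, the stable indices of periodic orbits of $H(p,f)$ form an interval $[d_-,d_+]$; the Bonatti--D\'\i az--Pujals--Ures arguments force $d_-\in\{\di E^s,\di E^s+1\}$ and $d_+\in\{\di E^s+k-1,\di E^s+k\}$, where the $\pm1$ reflects whether the extremal central bundles are actually realized as stable direction of some periodic orbit. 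Finally, for each $i$ the existence in $H(p,f)$ of periodic points with Lyapunov exponent along $E^c_i$ arbitrarily close to zero is forced by the finest-splitting property: if not, there would be a uniform gap between exponents on $E^c_i$ and one of its neighbouring bundles for all periodic orbits, which by density and a Pliss-type argument would upgrade to a dominated sub-splitting of $E^c_i$, contradicting its minimality.

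The principal obstacle is Step~2, the \emph{dimensional rigidity} of the central factors away from tangencies. It is this step that really requires Gourmelon's strengthening of Franks' lemma, since one must perturb the derivative along a periodic orbit while controlling both the newly-created weak/complex eigenstructure and the invariant manifolds well enough to realize an actual homoclinic tangency; the fact that such tangencies are forbidden is then what pins the dimension of each $E^c_i$ to one. Steps~3 and~4 are, by comparison, fairly routine combinations of the ergodic closing lemma with the density of periodic orbits in homoclinic classes, once the splitting provided by Steps~1--2 is in hand.
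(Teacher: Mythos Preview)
The paper does not prove this theorem at all: it is quoted verbatim as ``Theorem 1.1 of \cite{CSY}'' (Crovisier--Sambarino--Yang) and used as a black box in the proof of Theorem~\ref{t.med0}. So there is no ``paper's own proof'' to compare against; you have sketched an argument for a result the authors simply import.

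That said, your outline is a fair high-level summary of the strategy actually carried out in \cite{CSY}: the core mechanism is indeed that the absence of tangencies in a $C^1$-neighbourhood, combined with Gourmelon's Franks-type lemma, forces every indecomposable central sub-bundle over a chain-recurrence class to be one-dimensional, and the extremal bundles to be uniformly hyperbolic. Your Steps~1--2 capture this correctly. Step~3 is slightly off in emphasis: in \cite{CSY} the fact that an aperiodic class carries a single one-dimensional centre is obtained not by contradicting aperiodicity via two central factors, but rather from the structure of the dominated splitting together with a Liao-type selecting lemma and the weak shadowing/closing arguments that show any extra central direction would produce periodic orbits inside the class. In Step~4, your justification for near-zero exponents along each $E^c_i$ is essentially circular as written: the finest-splitting hypothesis alone does not preclude a uniform sign of the exponent along $E^c_i$; what \cite{CSY} actually uses is the transition property between periodic orbits of different indices inside $H(p)$ (coming from the Bonatti--D\'\i az blender/transition machinery), which manufactures periodic points with central exponent as close to zero as desired.

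In short: nothing to fix relative to the present paper, since the authors never claim to prove this; if your goal is to reconstruct the proof of \cite{CSY}, tighten Step~3 and replace the Step~4 argument by the index-transition mechanism.
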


For $x\in\Lambda$ and $i\in\{1,..., k\}$ let us denote
\begin{equation}\label{e.pp}
\begin{array}{rlll}
 E^{cs,i}(x) :=E^s(x)\oplus E^c_1(x)\oplus \cdots\oplus E^c_i(x); \,\,
 E^{cu,i}(x) :=E^c_{i}(x)\oplus\cdots \oplus E^c_k(x)\oplus E^u(x).
  \end{array}
\end{equation}
We also let
$E^{cs,0}=E^s$ and $E^{cu,k+1}=E^u$ and write $s=\dim (E^s)$ and $u=\dim (E^u)$.
\medbreak


Let us recall the properties of fake central manifolds $\widehat W^{cs} $ due to Burns and Wilkinson, \cite{BW}, see also \cite{DFPV}.
\begin{Prop}\label{p.fake}
Let $f:M\rightarrow M$ be a $C^1$ diffeomorphism and $\Lambda$ a
compact
$f$-invariant set with a partially hyperbolic splitting,
$$T_{\Lambda}M=E^s\oplus E^c_{1}\oplus \cdots \oplus E^c_{k}\oplus E^u.$$
Let $E^{cs,i}$ and $E^{cu,i}$ be as in equation \eqref{e.pp} and
consider their extensions
$\tilde E^{cs,i}$ and $\tilde E^{cu,i}$ to a small neighborhood
of $\Lambda$.

Then for any $\epsilon>0$ there exist constants $R>r>r_1>0$ such that, for every $p\in \Lambda$,
the neighborhood $B(p,r)$ is foliated by foliations $\widehat{W}^u(p)$, $\widehat{W}^s(p)$,
 $\widehat{W}^{cs,i}(p)$, and $\widehat{W}^{cu, i}(p)$, $i\in\{1,..., k\}$,
 such that for each $\beta\in \{u,s, (cs, i), (cu,i)\}$
the following properties hold:
\begin{enumerate}
\item[(i)] {\em{Almost tangency of the invariant distributions.}}  For each $q\in B(p,r)$,
the leaf $\widehat{W}^{\beta}_p (q)$ is $C^1$, and the tangent space $T_q\widehat{W}^{\beta}_p(q)$
lies in a cone of radius $\epsilon$ about $\widetilde{E}^{\beta}(q)$.
\item[(ii)] {\em{Coherence.}}  $\widehat{W}^s_p$ subfoliates $\widehat{W}^{cs,i}_p$  and
$\widehat{W}^u_p$ subfoliates $\widehat{W}^{cu, i}_p$ for each $i\in\{1,..., k\}$.
\item[(iii)] {\em{Local invariance.}}  For each $q\in B(p, r_1)$ we have
$$
f(\widehat{W}^{\beta}_p(q,r_1))\subset \widehat{W}^{\beta}_{f(p)}(f(q))\textrm{ and }
f^{-1}(\widehat{W}^{\beta}_p(q,r_1))\subset \widehat{W}^{\beta}_{f^{-1}(p)}(f^{-1}(q)),
$$
here $\widehat{W}^{\beta}_p(q,r_1)$ is the connected component of
$\widehat{W}^{\beta}_p(q)\cap B(q,r_1)$ containing $q$.
\item[(iv)] {\em{Uniquencess.}}  $\widehat{W}^s_p(p)=W^s(p,r)$ and $\widehat{W}^u_p(p)=W^u(p,r)$.
\end{enumerate}
\end{Prop}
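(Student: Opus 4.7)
The plan is to follow the Burns--Wilkinson scheme of \cite{BW}: rather than constructing the fake foliations directly on $M$, one builds them in each tangent space $T_pM$ by a graph-transform argument applied to a globalized model of $f$ near $p$, and then transports them back to a small ball $B(p,r)$ via the exponential chart. This is effective because, on a ball of radius $R$ uniform in $p\in\Lambda$, the local representative $\tilde f_p := \exp_{f(p)}^{-1}\circ f\circ \exp_p$ is well-defined and $C^1$-close to the linear map $D_pf$, which already respects the full dominated splitting at $p$. Using a smooth cutoff I would graft $\tilde f_p$ together with $D_pf$ outside $B(0,R/2)$ to obtain a globally defined diffeomorphism $g_p:T_pM\to T_{f(p)}M$ that is $C^1$-close to $D_pf$ everywhere and hence preserves narrow cone fields around each $\tilde E^\beta(p)$.

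First I would continuously extend the sub-bundles $E^s,E^c_1,\dots,E^c_k,E^u$ to $\tilde E^s,\tilde E^c_1,\dots,\tilde E^c_k,\tilde E^u$ on a neighborhood of $\Lambda$, which is routine by compactness and continuity. Then, for each $\beta\in\{u,s,(cs,i),(cu,i)\}$, the graph transform acting on Lipschitz sections over $\tilde E^\beta(p)$ has a unique attracting fixed point whose translates assemble into an invariant $C^1$ foliation of $T_pM$; pulling it back by $\exp_p$ produces the foliation $\widehat W^\beta_p$ on $B(p,r)$. Property (i) on the tangent direction follows from the cone-width bound, which can be made as small as any prescribed $\epsilon$ by shrinking $R$. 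Coherence (ii) is automatic from the inclusions $\tilde E^s\subset\tilde E^{cs,i}$ and $\tilde E^u\subset\tilde E^{cu,i}$: running the graph transform first in the lower-dimensional bundle and then in the higher-dimensional one yields nested foliations. Local invariance (iii) comes from choosing $r_1\ll r$ so that $f(B(p,r_1))\subset B(f(p),r)$ and so that $g_p$ agrees with $\tilde f_p$ on $B(0,r_1)$; the $g_p$-invariance of the foliation on $T_pM$ then translates directly into the stated $f$-invariance between $\widehat W^\beta_p$ and $\widehat W^\beta_{f(p)}$. Uniqueness (iv) holds because the genuine manifolds $W^{s,u}(p,r)$ are themselves invariant graphs through the fixed point $0$ of the graph transform, so uniqueness of the attracting fixed point forces them to coincide with $\widehat W^{s,u}_p(p)$.

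The main obstacle I expect is not conceptual but bookkeeping: the constants $R>r>r_1$, the cutoff radii, and the cone widths must all be chosen uniformly in $p\in\Lambda$ while the $k+2$ sub-bundles are treated simultaneously, so that every one of the graph transforms contracts on the same scale. In particular, one has to use the slowest of the $k+1$ pairwise domination ratios to set the cone widths, and the cutoff defining $g_p$ must be tuned finely enough that $g_p$ remains sufficiently $C^1$-close to $D_pf$ for all of these contractions to go through at once; ensuring that coherence and local invariance remain compatible across the whole nested family of foliations on a single common scale is the delicate part.
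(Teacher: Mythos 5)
Your outline is exactly the Burns--Wilkinson construction that the paper invokes verbatim (its proof of this proposition is just the citation to \cite[Section 3]{BW}): globalize the local representative of $f$ in each tangent space by a cutoff so it stays $C^1$-close to $D_pf$, run a graph transform (along the whole orbit, so that the fixed-point foliations of consecutive tangent spaces match up, which is what gives (iii)), and pull back by $\exp_p$, with (i), (ii), (iv) falling out of the cone bounds, the nesting of the bundles, and uniqueness of the attracting fixed point. So the proposal is correct and takes essentially the same route as the paper's (cited) proof.
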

\begin{proof}
See \cite[Section 3]{BW}.
\end{proof}

Given $j\in \{1,\dots,k\}$,
using Proposition \ref{p.fake},
we consider a small $r$ and the
submanifold
\begin{equation}\label{e.ffake}
\widetilde W^{cs,j}(x)=\bigcup_{z\in \gamma_j(x)}\,
\widehat W^{cs,j-1}_x(z,r).
\end{equation}
This submanifold has dimension $s+j$ and is transverse
to $\widehat W^{cu,j+1}_x(z)$ for all $z$ close to $x$.
Note that
$\widetilde W^{cs,1}(x)$ is foliated by stable manifolds
(recall that $\widehat W^{cs,0}_x(z)\subset W^{s}(z)$).

The  next two lemmas follow straightforwardly
from the fact that the angles
between unitary vectors in the cone fields
$\cC (E^{cs,j})$ and
 $\cC(E^{cu,j+1})$ are uniformly bounded away from zero.

\begin{Lem}\label{l.fakeanglesa}
 There is $\kappa>0$ such that for every $j\in \{1,\dots ,k\}$ and
every $\delta>0$ small enough the following property holds:

For every $x\in \Lambda$, every $y\in B_\delta(x)$,
every local submanifolds $N(x)$ of dimension  $s+j$
tangent to the conefield $\cC (E^{cs,j})$ containing $x$
and $M(y)$ of dimension $(k-j)+u$
tangent to the conefield  $\cC(E^{cu,j+1})$ containing $y$
one has that
$N(x) \cap M(y)$ is contained $B_{\kappa\,\delta}(x)$.
\end{Lem}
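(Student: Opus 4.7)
\emph{Proof plan.} The idea is to work in a chart around $x$ in which the two relevant cone fields become standard complementary cones, and then realise $N(x)$ and $M(y)$ as Lipschitz graphs with a small Lipschitz constant, reducing the claim to a short contraction-style estimate.

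Fix $x\in\Lambda$ and choose a smooth chart $\varphi_x:B_r(x)\to T_xM\simeq \R^{s+j}\times \R^{(k-j)+u}$ sending $x$ to $0$ and the subspaces $E^{cs,j}(x)$ and $E^{cu,j+1}(x)$ onto the two coordinate factors. By compactness of $\Lambda$ and continuity of the splitting, these charts can be chosen with a uniform bound on their bi-Lipschitz distortion. The hypothesis that the angle between unit vectors of $\cC(E^{cs,j})$ and $\cC(E^{cu,j+1})$ is bounded below by a positive constant, together with shrinking the cone aperture and the chart radius once and for all, allows us to assume that in these coordinates
\[ \cC(E^{cs,j})\subset\{(u,v):\|v\|\le\eta\|u\|\},\qquad \cC(E^{cu,j+1})\subset\{(u,v):\|u\|\le\eta\|v\|\}, \]
for a single constant $\eta<1$, independent of $x$, that can be taken as small as we wish.

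Any local $(s+j)$-dimensional submanifold $N(x)$ through $x=0$ with tangent spaces in $\cC(E^{cs,j})$ is then the graph $\{(u,\phi(u))\}$ of a map $\phi$ with $\phi(0)=0$ and $\mathrm{Lip}(\phi)\le\eta$; similarly, $M(y)$ is the graph $\{(\psi(v),v)\}$ of a map $\psi$ with $\mathrm{Lip}(\psi)\le\eta$ and $\psi(y_2)=y_1$, where we write $y=(y_1,y_2)$ in the chart. If $z=(u_0,v_0)\in N(x)\cap M(y)$, then $v_0=\phi(u_0)$ and $u_0=\psi(v_0)$, so
\[ \|u_0\|\le\|\psi(v_0)-\psi(y_2)\|+\|y_1\|\le\eta\|v_0\|+\eta\|y_2\|+\|y_1\|,\qquad \|v_0\|\le\eta\|u_0\|. \]
Substituting the second inequality into the first gives $(1-\eta^2)\|u_0\|\le(1+\eta)\|y\|$, and consequently $\|z\|\le(1+\eta)\|u_0\|\le\frac{1+\eta}{1-\eta}\|y\|\le\frac{1+\eta}{1-\eta}\,\delta$. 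Passing back to the ambient metric and absorbing the uniform bi-Lipschitz distortion of the charts into the constant yields the claim, with $\kappa$ any number comfortably larger than $(1+\eta)/(1-\eta)$.

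The only mildly delicate point is uniformity in $x\in\Lambda$ of the chart construction, of the effective aperture $\eta$, and of the distortion constant. This is not a serious obstacle: the compactness of $\Lambda$, the continuity of the dominated splitting together with the extension used to define the cone fields in a neighbourhood of $\Lambda$, and the uniform angle bound supplied by the hypothesis combine via a finite-covering argument to produce a single $\delta_0>0$ and a single $\kappa>0$ that work simultaneously for every $x\in\Lambda$ and every $\delta\in(0,\delta_0)$.
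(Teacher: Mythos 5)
Your argument is correct and is essentially the paper's own: the authors justify this lemma in one line by the uniform lower bound on the angle between the cone fields $\cC(E^{cs,j})$ and $\cC(E^{cu,j+1})$, and your adapted charts plus the Lipschitz-graph intersection estimate are exactly the standard expansion of that remark, with uniformity in $x$ handled by compactness as you indicate. No gaps.
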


\begin{Lem}\label{l.fakeanglesb}
 There is $\kappa>0$ such that for every $j\in \{1,\dots ,k\}$ and
every $\delta>0$ small enough the following property holds:

Take any  $x\in \Lambda$
and the local manifold $\widetilde W^{cs,j}(x)$ in \eqref{e.ffake}.
For every $y\in B_\delta(x)\cap \widetilde W^{cs,j}(x)$
one has that
$\gamma_j(x) \cap
\widehat W^{cs,j-1}_x (y)$ is contained in $B_{\kappa\,\delta}(x)$.
\end{Lem}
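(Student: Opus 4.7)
The plan is to reduce the statement to a uniform transversality argument inside $\widetilde W^{cs,j}(x)$, completely analogous in spirit to Lemma \ref{l.fakeanglesa}. The key input is that $E^{cs,j}(x)=E^{cs,j-1}(x)\oplus E^c_j(x)$ is a direct sum of subspaces which vary continuously over the compact set $\Lambda$, so the angle between unit vectors in the cone fields $\cC(E^{cs,j-1})$ and $\cC(E^c_j)$ is bounded below by some $\theta_0>0$, uniformly in $x$, provided the opening of the cones is chosen small enough in Proposition \ref{p.fake}.

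First I would check that, for $\delta$ small, the intersection $\gamma_j(x)\cap \widehat W^{cs,j-1}_x(y)$ consists of exactly one point $z$. By the very construction of $\widetilde W^{cs,j}(x)$ in \eqref{e.ffake} as the saturation of $\gamma_j(x)$ by the fake leaves $\widehat W^{cs,j-1}_x(\cdot,r)$, each $y\in \widetilde W^{cs,j}(x)\cap B_\delta(x)$ lies on a unique such leaf, and the intersection of that leaf with $\gamma_j(x)$ is transverse inside $\widetilde W^{cs,j}(x)$ because $\gamma_j(x)$ is tangent to $\cC(E^c_j)$ while $\widehat W^{cs,j-1}_x(y)$ is tangent to $\cC(E^{cs,j-1})$ at every point, by Proposition \ref{p.fake}(i), and the dimensions $1$ and $s+j-1$ add up to $\dim \widetilde W^{cs,j}(x)=s+j$.

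Next I would set up local coordinates on $\widetilde W^{cs,j}(x)$ in a neighborhood of $x$ adapted to the splitting $E^{cs,j-1}(x)\oplus E^c_j(x)$, via the exponential map $\exp_x$. In these coordinates $\gamma_j(x)$ becomes the graph of a smooth map from $E^c_j(x)$ to $E^{cs,j-1}(x)$ whose derivative at $0$ vanishes, and $\widehat W^{cs,j-1}_x(y)$ becomes the graph of a $C^1$ map from $E^{cs,j-1}(x)$ to $E^c_j(x)$ whose derivative is bounded by $\epsilon$, by item (i) of Proposition \ref{p.fake}. Writing $y=(y_1,y_2)$ and $z=(z_1,z_2)$, the two graph equations together with $\|y\|\leq \delta$ force, by the standard estimate governed by the angle $\theta_0$ between the cones, an inequality of the form $\|z\|\leq \kappa\|y\|\leq \kappa\,\delta$ for a constant $\kappa$ depending only on $\theta_0$ and on the chosen cone opening.

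I do not expect a serious obstacle here; the only care is in (a) checking that the intersection point $z$ stays inside the ball $B(x,r_1)$ where Proposition \ref{p.fake} applies, which follows automatically from the Lipschitz estimate once $\delta$ is taken small enough, and (b) verifying that the constant $\kappa$ can be chosen independently of $x\in \Lambda$ and of $j\in\{1,\dots,k\}$, which is granted by the uniform continuity of the splitting and the compactness of $\Lambda$ together with the finite range of $j$.
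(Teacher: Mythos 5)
Your proposal is correct and follows essentially the same route as the paper: the authors dispose of this lemma (together with Lemma \ref{l.fakeanglesa}) in one line, asserting that it follows from the uniform lower bound on angles between unit vectors in the complementary cone fields, which is exactly the transversality-plus-Lipschitz estimate you spell out. Your version is simply a more detailed write-up of that argument, with the correct identification of the relevant pair of cones inside $\widetilde W^{cs,j}(x)$, namely those around $E^{cs,j-1}$ and $E^c_j$.
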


 As a consequence of Theorem \ref{CSY} and Lemmas \ref{l.fakeanglesa} and \ref{l.fakeanglesb} we have
 \begin{Teo} \label{t.med0}
 Let $\mu$ be a Borel probability measure defined on $M$ and let $f\in\mathcal{G}$ where $\mathcal{G}$ is as in Theorem \ref{CSY}.
 Then there is $\delta>0$ such that $\mu(\Gamma_\delta(x))=0$ for all $x\in M$.
 \end{Teo}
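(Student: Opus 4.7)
My plan is to combine the partially hyperbolic structure of $f\in\mathcal{G}$ provided by Theorem~\ref{CSY} with the geometric control of Proposition~\ref{p.fake} and Lemmas~\ref{l.fakeanglesa}--\ref{l.fakeanglesb} in order to trap $\Gamma_\delta(x)$ inside a small, structured subset of the $k$-dimensional fake central disc through $x$, and then to exploit the non-atomicity of $\mu$ to conclude that this subset is $\mu$-null.

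First, for $f\in\mathcal{G}$, Theorem~\ref{CSY} supplies the dominated splitting
$$TM=E^s\oplus E^c_1\oplus\cdots\oplus E^c_k\oplus E^u$$
with one-dimensional central sub-bundles. I would fix the constants $\epsilon$ and $R>r>r_1>0$ of Proposition~\ref{p.fake} and $\kappa$ of Lemmas~\ref{l.fakeanglesa}--\ref{l.fakeanglesb}, and take $\delta<r_1$ small enough that iterated applications of those lemmas cause only a bounded blow-up of diameter. A standard cone-field/graph-transform argument based on local invariance (iii) of Proposition~\ref{p.fake}, together with the uniform expansion along $E^u$ and contraction along $E^s$, forces any $y\in\Gamma_\delta(x)$ to lie in $\widehat W^{cs,k}_x(x,r_1)\cap\widehat W^{cu,1}_x(x,r_1)$, which is a fake central disc of dimension $k$ through $x$.

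Next, I would localize $y$ further inside this central disc by applying Lemma~\ref{l.fakeanglesa} at each intermediate level $j\in\{1,\dots,k\}$, with $N$ a translate of $\widehat W^{cs,j}$ at $f^n(x)$ and $M$ a translate of $\widehat W^{cu,j+1}$ at $f^n(y)$, and Lemma~\ref{l.fakeanglesb} to control the intersection with the submanifold $\widetilde W^{cs,j}(x)$ from \eqref{e.ffake}. Iterating these estimates along the full orbit of $x$ keeps all relevant diameters proportional to $\delta$ while using the uniform transversality between $\cC(E^{cs,j})$ and $\cC(E^{cu,j+1})$, together with the domination inequalities, to pin down the projection of $y$ onto each one-dimensional bundle $E^c_j$. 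The output is that $\Gamma_\delta(x)$ is contained in an orbit-invariant, geometrically thin subset of the fake central disc of diameter $\kappa\,\delta$. Since $\mu$ is non-atomic and $\delta$ may be chosen arbitrarily small depending on $\mu$, the resulting trapped set has $\mu$-measure zero for every $x\in M$; uniformity of $\delta$ in $x$ is inherited from the uniformity of the constants $\kappa,\epsilon,r_1$ on the compact manifold.

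The main obstacle will be the inductive localization step. Because the central Lyapunov exponents can vanish (aperiodic case) or only be arbitrarily close to zero (homoclinic case), one cannot reduce the trapped set to a discrete one by exponential contraction in the center, and must rely solely on the geometric transversality of Lemmas~\ref{l.fakeanglesa}--\ref{l.fakeanglesb} chained along the entire orbit under forward and backward iteration. Converting the resulting geometric smallness into $\mu$-measure zero for an arbitrary non-atomic Borel $\mu$ is the main technical content: one has to rule out the possibility of an invariant arc of positive $\mu$-mass inside a one-dimensional $E^c_j$-leaf, which is precisely where the one-dimensionality of each central bundle granted by Theorem~\ref{CSY} is indispensable.
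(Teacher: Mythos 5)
Your opening step coincides with the paper's: use the partial hyperbolicity supplied by Theorem \ref{CSY}, the fake foliations of Proposition \ref{p.fake}, and Lemmas \ref{l.fakeanglesa}--\ref{l.fakeanglesb}, and then argue by forward iteration (uniform expansion on $E^u$) and backward iteration (uniform contraction on $E^s$) that any $y\in\Gamma_\delta(x)$ must lie in the $k$-dimensional fake central disc through $x$. The paper implements this by projecting $y$ onto $\widehat W^u(x)$ along $\widetilde W^{cs,k}(y)$ and showing the projection must equal $x$, since otherwise $\dist(f^n(x),f^n(y_u))\geq(\theta')^n\dist(x,y_u)$ eventually exceeds $\kappa\delta$ and the angle lemmas force $\dist(f^n(x),f^n(y))>\delta$. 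Up to this point your plan is sound and is essentially the paper's route (you should still handle, as the paper does, the case of a general $x\in M$ by iterating until the orbit enters a neighborhood of $\omega(x)$, resp.\ $\alpha(x)$, where the splitting is defined).

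The genuine gap is your final step. You conclude that the trapped set is $\mu$-null because it has diameter comparable to $\kappa\delta$, $\mu$ is non-atomic, and ``$\delta$ may be chosen arbitrarily small depending on $\mu$.'' This is a non sequitur on two counts. First, non-atomicity does not make small sets null: every ball centered at a point of the support of $\mu$ has positive measure, however small its radius, and a non-atomic $\mu$ can be concentrated on a single central arc, in which case a $\kappa\delta$-neighborhood of $x$ inside the central disc carries positive mass. Second, the theorem demands one fixed $\delta>0$ working for all $x\in M$; letting $\delta\downarrow 0$ only gives $\bigcap_{\delta>0}\Gamma_\delta(x)=\{x\}$ and hence $\lim_{\delta\to 0}\mu(\Gamma_\delta(x))=0$ for each fixed $x$, which is true for every homeomorphism and every non-atomic measure and carries no information about measure expansiveness. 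Your intermediate ``inductive localization'' inside the central disc cannot close this gap either: as you yourself note, there is no exponential contraction or expansion along the $E^c_j$, so chaining Lemmas \ref{l.fakeanglesa}--\ref{l.fakeanglesb} never shrinks the trapped set below a $\kappa\delta$-ball in the central disc, which is not smaller in any measure-theoretic sense than the disc itself. What is actually needed --- and what you correctly identify in your last sentence but do not supply --- is a dynamical argument excluding a positive-$\mu$-mass subset of the central disc whose entire orbit stays $\delta$-close to the orbit of $x$; naming that difficulty is not the same as resolving it, so the proof is incomplete precisely at the step that carries the content of the theorem.
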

 \begin{proof}
Let $\mu$ be a Borel probability measure of $M$ and choose $x\in \Omega(f)$. Then there is either an aperiodic class or a homoclinic class $H$ in $\omega(x)$. In any case $H$ is partially hyperbolic, since we are assuming that $f\in\mathcal{G}$.

Let $T_HM=E^s \oplus E^c_1\oplus\cdots\oplus E^c_k\oplus E^u\ ,$ with $E^s$ uniformly contracting and $E^u$ uniformly expanding.  Assume $\theta>1$ is the minimum rate of expansion of $E^u$ for $z\in\Lambda$.
Let $c>0$ such that $(1-c)\theta>1$ and find $\delta>0$ less or equal than that of Lemmas \ref{l.fakeanglesa} and \ref{l.fakeanglesb} and also less than $r/2$ where $r>0$ is given by Proposition \ref{p.fake}, such that if
$$\dist(x,y)\leq (\kappa+1)\delta\quad \mbox{then}\quad 1-c\leq\frac{\|Df|_{E^u(y)}\|}{\|Df|_{E^u(x)}\|}\leq 1+c\,.$$
 For this choice of $\delta$ it holds that $\mu(\Gamma_\delta(x))=0$. For, if $y\in\Gamma_\delta(x)$ then letting $y_u$ be the projection of $y$ into $\widehat W^u(x)$ along $\widetilde W^{cs,k}(y)$ if it were the case that $\dist(x,y_u)>0$ then setting $\theta'=(1-c)\theta$ then we get for $n\geq 1$
$$\dist(f^n(x),f^n(y_u))\geq (\theta')^n\dist(x,y_u)\, .$$
Since $\theta'>1$ eventually $\dist(f^n(x),f^n(y_u)>\kappa\, \delta$ and hence, by Lemmas \ref{l.fakeanglesa} and \ref{l.fakeanglesb} we obtain $\dist(f^n(x),f^n(y)>\delta$ contradicting the fact that $y\in\Gamma_\delta(x)$.

Thus $\Gamma_\delta(x)$ is contained in $\widehat W^{cs,k}(x)$. By backward iteration we also get that
$\Gamma_\delta(x)\subset \widehat W^{cu,0}(x)$.

This implies that $\mu(\Gamma_\delta(x))=0$ finishing the proof for $x\in\Omega(x)$.

Assume now that $x$ is any point in $M$. Then by forward iteration we find $N>0$ such that
$f^n(x)\in B(\omega(x),r/4)$ where $r>0$ is as in Proposition \ref{p.fake}. Since $\omega(x)\subset \Omega(f)$ we have that there is either an aperiodic class or a homoclinic class $H$ which is partially hyperbolic such that $\omega(x)\subset H$ and therefore $f^n(x)\in B(H,r/4)$ for $n\geq N$. The result follows as in the case $x\in\Omega(f)$. Similarly by backward iteration we find $N'>0$ such that $f^{-n}(x)\in B(\alpha(x),r/4)$ for $n\geq N'$.

We may conclude, using similar estimations as in the case $x\in\Omega(f)$, that $\mu(\Gamma_\delta(x))=0$.

 \end{proof}

Theorem \ref{Teo C} is an immediate consequence of
Theorem \ref{t.med0} .

 \begin{Obs}
 For the validity of Theorem \ref{t.med0} it is enough to have that $E^s$ is uniformly contracting for the $\alpha$- limit of $x$ or $E^u$ is uniformly expanding for the $\omega$-limit of $x$.
 \end{Obs}

\section{Surface diffeomorphisms in ${\mathcal{H}\mathcal{T}}$.}

In the remaining of the paper $M$ is a compact boundaryless surface.

Let $f:M\to M$ be a diffeomorphism and assume that $f$ exibits a homoclinic
tangency associated to a hyperbolic periodic point $p$ of $f$.

\subsection{ Horseshoes with positive Lebesgue measure.}
%
%
%

It is proved at \cite{Bo2} that there is a $C^1$ horseshoe with positive Lebesgue measure.
In \cite{RY} it is constructed a such a horseshoe 
fattening up an
invariant horseshoe $\Lambda$ to have positive Lebesgue measure as Bowen did. They obtain this fatted horseshoe modifying a diffeomorphism $f$ defined in a square $B=[0,1]\times[0,1]$ so that $f|B$ gives a linear evenly spaced full shift on 2 symbols, see \cite[\S 1]{RY}.
 The perturbed diffeomorphism is $C^1$ close to the original one, \cite[\S 3 and \S 4]{RY}. After that they embed $\Lambda$ in a $C^1$ diffeomorphism $F$ defined on a surface \cite[\S 5]{RY}. Although this construction is made to embed the horseshoe on a $C^1$-Anosov diffeomorphism, the same can be done for any diffeomorphism.

\begin{Obs}
Perhaps it is worthwhile to note that it is crucial that we are working in the $C^1$-topology. Bowen, \cite{Bo1}, proved that $C^2$ diffeomorphisms
have no horseshoes with positive volume.
\end{Obs}

\subsection{Proof of Theorem \ref{Teo D}.}
We now make use of the construction in \cite{RY} to prove that arbitrarily near a diffeomorphism exhibiting a homoclinic tangency there is one which is not measure-expansive.
\medbreak

We start establishing some auxiliary lemmas proved elsewhere.

\begin{Lem} \label{flat-tangency}
Given a $C^1$ diffeomorphism $f:M\to M$ with a homoclinic tangency associated to a hyperbolic periodic point $p$ there is a $C^1$ near diffeomrphism $f_1$ presenting a flat homoclinic tangency, i. e., there is a small arc $J$ contained in $W^s(p,f_1)\cap W^u(p,f_1)$.
\end{Lem}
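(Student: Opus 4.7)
\emph{Proof plan.} The idea is to perform a $C^1$-small perturbation of $f$ supported in a wandering neighborhood of the tangency point, designed to flatten a small piece of the local unstable manifold onto the stable manifold.

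First I would reduce to the case in which $p$ is a fixed point of $f$ by replacing $f$ with $f^{\pi(p)}$. Let $q\ne p$ denote a non-transverse intersection point of $W^s(p,f)$ and $W^u(p,f)$ provided by the hypothesis. Being a (non-fixed) homoclinic point of the hyperbolic fixed point $p$, the point $q$ is wandering: $\alpha(q)=\omega(q)=\{p\}$, and a standard argument yields a small open set $U\ni q$ disjoint from a neighborhood of $p$ and satisfying $f^n(U)\cap U=\emptyset$ for every $n\ne 0$. Pick $C^1$ coordinates $(x,y)$ on $U$ in which $W^s(p,f)\cap U$ is the axis $\{y=0\}$ and $W^u(p,f)\cap U$ is a graph $\{y=\varphi(x)\}$ with $\varphi(0)=\varphi'(0)=0$, the last equality being the tangency condition at $q=(0,0)$.

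Next, fix a small interval $J_0=[-\delta,\delta]$ and a smooth bump $\rho$ compactly supported in $U$, equal to $1$ on a thin strip containing the arc $\{(x,\varphi(x)):x\in J_0\}$ and $0$ outside a slightly larger strip. Define
$$h(x,y)=(x,\, y-\rho(x,y)\varphi(x))$$
on $U$ and $h=\mathrm{id}$ outside $U$. For small parameters $h$ is a diffeomorphism $C^1$-close to the identity: since $\varphi(0)=\varphi'(0)=0$, both $\rho\varphi$ and its derivatives can be made arbitrarily small by choosing the transition scale of $\rho$ first and then shrinking $\delta$. Set $f_1:=h\circ f$. Then $f_1$ is $C^1$-close to $f$ and coincides with $f$ outside $U$; in particular, since $p\notin U$, the point $p$ remains a hyperbolic fixed point of $f_1$ whose local invariant manifolds near $p$ agree with those of $f$.

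Finally, I would verify that the arc $J=\{(x,0):x\in J_0\}$ is contained in $W^s(p,f_1)\cap W^u(p,f_1)$. Let $z\in J$. By the wandering property, $f^n(z)\notin U$ for every $n\ne 0$, so $f_1^n(z)=f^n(z)$ for $n\ge 1$; since $z$ lies on the piece of $W^s(p,f)$ inside $U$, the forward orbit converges to $p$ and $z\in W^s(p,f_1)$. On the other hand $f_1^{-1}(z)=f^{-1}(h^{-1}(z))$ with $h^{-1}(z)=(x,\varphi(x))\in W^u(p,f)$, and using the wandering property once more one obtains $f_1^{-n}(z)=f^{-n}(h^{-1}(z))$ for every $n\ge 1$, so the backward orbit also tends to $p$ and $z\in W^u(p,f_1)$. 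Hence $J\subset W^s(p,f_1)\cap W^u(p,f_1)$, which is the desired flat tangency.

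The main obstacle I anticipate is ensuring that $h$ is genuinely $C^1$-small: this requires a careful interplay between the support size of $\rho$, its transition scale, and the magnitudes of $\varphi$ and $\varphi'$. The vanishing of $\varphi'$ at the origin is what makes the estimate go through, but one must choose parameters in the right order (desired $C^1$-bound first, then the transition scale of $\rho$, then $\delta$). The wandering-neighborhood step is also essential and relies only on $q\notin\alpha(q)\cup\omega(q)$, a consequence of $p$ being hyperbolic and $q\ne p$.
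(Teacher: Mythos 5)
The paper itself does not prove this lemma; it simply cites \cite[Proposition 2.6]{PaVi2}. Your explicit flattening construction $f_1=h\circ f$, with $h(x,y)=(x,\,y-\rho(x,y)\varphi(x))$ pushing the unstable graph onto the stable axis, is the right kind of argument, and your analysis of the $C^1$-smallness (exploiting $\varphi(0)=\varphi'(0)=0$ and choosing the transition scales of $\rho$ before shrinking $\delta$) is sound. However, there is one genuine error: the claim that $q$ is a \emph{wandering} point, i.e.\ that some neighborhood $U\ni q$ satisfies $f^n(U)\cap U=\emptyset$ for all $n\neq 0$. This is false for any homoclinic point, transverse or tangential. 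You are conflating non-recurrence ($q\notin\alpha(q)\cup\omega(q)$, which is true) with wandering (which is not). Indeed, take any small arc $\gamma\subset U$ through $q$ transverse to $W^u(p)$ at $q$; by the inclination lemma, $f^{-n}(\gamma)$ accumulates in the $C^1$ sense on compact arcs of $W^s(p)$, in particular on the arc of $W^s(p)$ through $q$, so $f^{-n}(U)\cap U\neq\emptyset$ for all large $n$. Thus the ``standard argument'' you invoke does not exist, and as written the verification that $f_1^n(z)=f^n(z)$ for $n\geq 1$ and $f_1^{-n}(z)=f^{-n}(h^{-1}(z))$ for $n\geq 1$ rests on a false premise.

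The construction is nevertheless salvageable, because you never need the full open set $U$ to be wandering; you only need the support $S$ of $h-\mathrm{id}$ (the thin strip around the unstable arc) to be disjoint from $f^n(J)$ for all $n\geq 1$ and from $f^{-n}\bigl(\{(x,\varphi(x)):x\in J_0\}\bigr)$ for all $n\geq 1$. This weaker statement is true and is what you should prove: since $f^n(q)\to\mathcal{O}(p)$ as $|n|\to\infty$ and $f^n(q)\neq q$ for $n\neq 0$, the quantity $\epsilon_1=\inf_{n\neq 0}\dist(f^n(q),q)$ is positive; the iterates of the compact arcs converge uniformly to $\mathcal{O}(p)$, so all but finitely many of them lie in a fixed neighborhood of $\mathcal{O}(p)$ disjoint from $B(q,\epsilon_1/2)$, and the finitely many remaining iterates are handled by continuity after shrinking $J_0$ so that $f^n(J)\subset B(f^n(q),\epsilon_1/2)$ for those $n$. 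Choosing $S\subset B(q,\epsilon_1/2)$ then makes your orbit computations correct. Two further small inaccuracies you should repair in the same pass: the perturbation $f_1=h\circ f$ differs from $f$ on $f^{-1}(\mathrm{supp}(h-\mathrm{id}))$, not on $U$; and the reduction to a fixed point by passing to $f^{\pi(p)}$ is unnecessary and slightly misleading (a perturbation of $f^{\pi(p)}$ need not be a power of a perturbation of $f$) --- since your perturbation is $h\circ f$ with $f$ itself, just phrase the argument for the periodic orbit $\mathcal{O}(p)$ directly.
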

\begin{proof}
See \cite[Proposition 2.6]{PaVi2}.
\end{proof}

\begin{Lem} \label{small-horseshoes}
Given a $C^1$ diffeomorphism $f_1:M\to M$ with a flat homoclinic tangency associated to a hyperbolic periodic point $p$ there is a $C^1$ near diffeomorphism $f_2$ presenting a sequence of horseshoes $\widehat \Lambda_n$ such that for all $k\in\Z$:  $\diam(f^k(\widehat \Lambda_n)<r_n$ with $r_n\to 0$ when $n\to\infty$.
\end{Lem}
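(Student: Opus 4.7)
My plan is to exploit that the iterates $J_n := f_1^n(J)$ of the flat tangency arc accumulate on the orbit of $p$ in order to produce, by a single $C^1$-small perturbation, a diffeomorphism $f_2$ with horseshoes of arbitrarily small spatial extent. We may assume without loss of generality that $p$ is a fixed point (passing to a suitable iterate of $f_1$). Since $J\subset W^s(p,f_1)\cap W^u(p,f_1)$, every iterate $J_n$ lies in $W^s(p,f_1)\cap W^u(p,f_1)$, and both $\diam(J_n)$ and $\dist(J_n,p)$ tend to $0$ as $|n|\to\infty$. Consequently, for each sufficiently large $n$ I fix a topological rectangle $B_n\supset J_n$ with $\diam(B_n)=\epsilon_n\to 0$, arranging the $B_n$ to be pairwise disjoint (possible because $p$ is the only accumulation point of $\bigcup_n J_n$).

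Inside each $B_n$ I perform the standard unfolding of the flat tangency: in a local chart where $f_1$ is nearly linear and $J_n$ appears as a short horizontal segment along which local branches of $W^s$ and $W^u$ coincide, a $C^1$-small bump separates these branches and creates a pair of transverse homoclinic points inside $B_n$. Choosing the $n$-th perturbation of $C^1$-norm smaller than $2^{-n}$ with pairwise disjoint supports, the sum defines a single diffeomorphism $f_2$ arbitrarily $C^1$-close to $f_1$ which carries transverse homoclinic points in every $B_n$.

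From each such transverse homoclinic pair the classical Smale construction yields a sub-rectangle $R_n\subset B_n$ and an integer $N_n$ such that $f_2^{N_n}(R_n)$ crosses $R_n$ in two ``horseshoe legs''. The maximal $f_2^{N_n}$-invariant subset $\widehat\Lambda_n\subset R_n$ is hyperbolic, conjugate to the full $2$-shift, and is the desired horseshoe of $f_2$. By $f_2^{N_n}$-periodicity only finitely many iterates $f_2^k(\widehat\Lambda_n)$, $k\in\Z$, are distinct, and each lies in a tubular neighborhood of the finite orbit segment issuing from $J_n$.

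The main obstacle is the uniform diameter control of all these intermediate iterates. The unstable expansion along the orbit segment can inflate $\diam(f_2^j(R_n))$ by a factor up to $\mu^{N_n-1}$, where $\mu$ is the unstable multiplier of $Df_2$ at $p$. One must therefore balance the initial scale $\epsilon_n$ against the return time $N_n$: since typically $N_n\sim C\log(1/\epsilon_n)$ (the time the stable contraction needs to bring $B_n$ down to scale $\epsilon_n$), the product $\mu^{N_n}\epsilon_n$ depends polynomially on $\epsilon_n$, and $\epsilon_n$ may thus be chosen small enough that $\diam(f_2^k(\widehat\Lambda_n))<r_n$ for every $k\in\Z$, with $r_n\to 0$.
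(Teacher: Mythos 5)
Your overall strategy --- perturb near the iterates of the flat arc $J$ to create transverse homoclinic points inside small boxes $B_n$, take the associated Smale horseshoes $\widehat\Lambda_n\subset R_n$, and then control the diameters of all their iterates --- is in the same spirit as the construction the paper invokes (the paper itself only cites \cite[Subsection 2.2]{PaVi2} rather than giving details). But the last step, which is the whole content of the lemma, has a genuine gap. You bound $\diam(f_2^j(\widehat\Lambda_n))$ for $0\le j\le N_n$ by $\diam(f_2^j(R_n))\lesssim \mu^{j}\epsilon_n\le\mu^{N_n}\epsilon_n$ and assert this can be made small by shrinking $\epsilon_n$. It cannot: for $f_2^{N_n}(R_n)$ to return and cross $R_n$, the unstable extent of the iterates of $R_n$ must grow from $\epsilon_n$ up to the macroscopic scale of the homoclinic excursion (the length along $W^u(p)$ from $p$ out to the box and back), so necessarily $\mu^{N_n}\epsilon_n\ge c_0$ for some constant $c_0>0$ independent of $n$. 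Equivalently, with $N_n\sim C\log(1/\epsilon_n)$ one gets $\mu^{N_n}\epsilon_n=\epsilon_n^{1-C\log\mu}$ with $C\log\mu\ge 1$; ``polynomial in $\epsilon_n$'' does not mean ``small''. Bounding only the forward images of $R_n$ therefore can never give $\diam(f_2^j(\widehat\Lambda_n))<r_n$ at the intermediate times $j$.

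What saves this kind of construction is that $\widehat\Lambda_n$ lies in $f_2^{lN_n}(R_n)$ for every $l\in\Z$, so that $f_2^j(\widehat\Lambda_n)\subset f_2^j(R_n)\cap f_2^{\,j-N_n}(R_n)$: the first factor controls the diameter for small $j$ (growth like $\mu^{j}\epsilon_n$ in the unstable direction), the second for $j$ near $N_n$ (growth like $\lambda^{-(N_n-j)}\epsilon_n$ in the stable direction), and optimizing at the crossover time yields a bound of order $c_0^{\beta}\,\epsilon_n^{1-\beta}$ with $\beta=\log\mu/(\log\mu+\log(1/\lambda))\in(0,1)$, which does tend to $0$. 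Some version of this two-sided estimate --- or of the localization along the flat arc exploited in \cite{PaVi2}; note that your argument never actually uses flatness, which is a warning sign --- is indispensable, and without it the lemma is not proved. A secondary point: each perturbation in $B_n$ displaces $W^u(p)$ and hence all subsequent arcs $J_m$, so the supports and sizes of the successive perturbations must be chosen inductively rather than independently.
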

\begin{proof}
The proof is essentially the same as that of \cite[Subsection 2.2]{PaVi2}.
\end{proof}

\begin{Prop} \label{horseshoe-gordo}
Let $f_2:M\to M$ as in the thesis of Lemma \ref{small-horseshoes}.
There is a $C^1$-diffeomorphism $F:M\to M$ arbitrarily near $f_2$ presenting a sequence of horseshoes $\Lambda_n$ such that the Lebesgue measure $\mu(\Lambda_n)>0$ and $\diam(\Lambda_n)<2r_n$, where $r_n$ is as in Lemma \ref{small-horseshoes}.
\end{Prop}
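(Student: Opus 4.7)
The plan is to apply the Bowen/Robinson--Young fattening procedure, independently and with a small parameter, to each of the horseshoes $\widehat\Lambda_n$ produced by Lemma \ref{small-horseshoes}. Fix $\epsilon>0$ for which we want $\|F-f_2\|_{C^1}<\epsilon$.

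First I would set up disjoint local Smale-horseshoe charts. Each $\widehat\Lambda_n$ is a hyperbolic horseshoe conjugate to a full shift on two symbols, and by Lemma \ref{small-horseshoes} its whole $f_2$-orbit has diameter less than $r_n$. Pick a small rectangular neighborhood $R_n$ of $\widehat\Lambda_n$ and an integer $m_n$ such that, after an auxiliary $C^1$-small perturbation supported in $R_n$ and a smooth change of coordinates, the first-return map $f_2^{m_n}\colon R_n\to M$ becomes the linear evenly spaced full shift on $[0,1]^2$ used in \cite[\S 1]{RY}. Since $r_n\to 0$ and the $\widehat\Lambda_n$ are distinct, by shrinking each $R_n$ inside a neighborhood of $\widehat\Lambda_n$ of diameter less than $2r_n$ I would further arrange that the orbit tubes $\bigcup_{k=0}^{m_n-1}f_2^k(R_n)$ are pairwise disjoint for different $n$. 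Inside each local model, I would then apply the construction of \cite[\S\S 3--4]{RY} with a fattening parameter $\delta_n>0$, replacing the linear horseshoe map by a diffeomorphism whose maximal invariant Cantor set in $[0,1]^2$ has strictly positive two-dimensional Lebesgue measure and which coincides with the linear model outside a neighborhood of this set. The RY perturbation is built from smooth factors depending continuously on $\delta_n$ and reducing to the identity as $\delta_n\to 0$, so $\delta_n$ may be chosen small enough that the $C^1$-distance from $f_2$ is less than $\epsilon$; positivity of the measure of the fattened horseshoe holds for any $\delta_n>0$, even though that measure may itself be small.

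Pulling back via the local charts, each perturbation $F_n$ is supported in a small neighborhood of $R_n$. Since these supports are pairwise disjoint, defining $F$ to coincide with $F_n$ on each support and with $f_2$ elsewhere produces a global $C^1$-diffeomorphism with $\|F-f_2\|_{C^1}=\sup_n\|F_n-f_2\|_{C^1}<\epsilon$, and the fattened horseshoe $\Lambda_n\subset R_n$ satisfies $\mu(\Lambda_n)>0$ and $\diam(\Lambda_n)<2r_n$, as required. The main obstacle I expect is the disjointness of the supports: since the horseshoes $\widehat\Lambda_n$ accumulate at the homoclinic tangency of $p$, their forward and backward $f_2$-iterates can come arbitrarily close, and avoiding overlap of the iterated rectangles $f_2^k(R_n)$ for different $n$ requires a careful choice of the rectangles and of the return times $m_n$, using essentially that $r_n\to 0$. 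Once this disjointness is secured, the rest of the argument is a bookkeeping task based on a careful reading of \cite{RY}, isolating the perturbation, controlling its $C^1$-norm, and confirming that the fattened horseshoe has positive Lebesgue measure for arbitrarily small $\delta_n$.
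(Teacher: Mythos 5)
Your proposal is correct and takes essentially the same route as the paper: the published proof likewise localizes the Robinson--Young fattening perturbation in small boxes $B_n\supset\widehat\Lambda_n$ with $\diam(B_n)\to 0$, takes these supports pairwise disjoint so the perturbations can be glued while staying $C^1$-close to $f_2$, and notes that the fattened $\Lambda_n$ has roughly the diameter of $\widehat\Lambda_n$, giving $\diam(\Lambda_n)<2r_n$. Your extra care about the disjointness of the orbit tubes and the choice of the fattening parameter $\delta_n$ only makes explicit what the paper leaves implicit.
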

\begin{proof}
We profit from the construction made in \cite{RY}. In fact we do not need to take care for the perturbed diffeomorphism to be Anosov, as is the case in \cite{RY}. Hence, in our case, to fit the construction in the global picture of the perturbations is easier than at \cite[\S 5]{RY}. 
Since the support of the perturbation needed to fatten the horseshoe $\widehat\Lambda_n$ is contained in a box $B_n\supset \widehat\Lambda_n$ such that $\lim_{n\to\infty}\diam(B_n)= 0$ (see \cite[\S 3]{RY}), it can be taken disjoint from the support of the previous perturbations needed to fatten $\widehat \Lambda_{j}$ for $j=1,\ldots, n-1$ (see \cite[\S 2 and \S 4]{RY}). From this it follows that $F$ is $C^1$- close to $f_2$ and has the desired sequence of horseshoes $\Lambda_n$ with positive Lebesgue measure.

Moreover, the construction of $\Lambda_n$ gives that the diameter of $\Lambda_n$ is about the same of that of $\widehat\Lambda_n$, so that we can assure that $\diam(\Lambda_n)<2r_n$ from $\diam(\widehat\Lambda_n)<r_n$.
\end{proof}

%
%

As a consequence we have
\begin{Teo}\label{ultimo}
Let $M$ be a smooth compact surface. Given a $C^1$-diffeomorphism $f:M\to M$ exhibiting a homoclinic tangency associated to a hyperbolic periodic point $p$, it is $C^1$-approximated by a diffeomorphism $F:M\to M$ such that $F$ is not measure expansive with respect to any absolutely continuous invariant measure respect to Lebesgue.
\end{Teo}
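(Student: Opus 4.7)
The plan is to chain Lemma \ref{flat-tangency}, Lemma \ref{small-horseshoes}, and Proposition \ref{horseshoe-gordo} in succession, starting from the given $f$. A first $C^1$-small perturbation yields $f_1$ with a flat homoclinic tangency; a second yields $f_2$ exhibiting a sequence of horseshoes $\widehat\Lambda_n$ whose iterates have diameters bounded by $r_n \to 0$; and the fattening construction of \cite{RY} recalled in Proposition \ref{horseshoe-gordo} then produces $F:M\to M$ arbitrarily $C^1$-close to $f$, together with a countable family of $F$-invariant horseshoes $\{\Lambda_n\}_{n\geq 1}$ satisfying $\mathrm{Leb}(\Lambda_n)>0$ and $\diam(\Lambda_n)<2r_n$.

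Once $F$ is in hand, non-expansiveness reduces to a short geometric observation. Each $\Lambda_n$ is $F$-invariant \emph{as a set}, so $F^k(\Lambda_n)=\Lambda_n$ for every $k\in\Z$, and therefore for all $x,y\in\Lambda_n$,
$$
\dist(F^k(x),F^k(y)) \leq \diam(\Lambda_n) < 2r_n \qquad\text{for all } k\in\Z.
$$
Given any threshold $\alpha>0$, choose $n$ so that $2r_n<\alpha$; then $\Lambda_n\subset\Gamma_\alpha(x)$ for every $x\in\Lambda_n$.

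Now let $\mu$ be any probability measure absolutely continuous with respect to Lebesgue and $F$-invariant, as in the statement. Since $\mathrm{Leb}(\Lambda_n)>0$ for every $n$ and the horseshoes are constructed inside the region where $\mu$ has positive density (for $\mu$ equal to normalized Lebesgue this is immediate; in general it follows from ergodic decomposition together with the fact that the perturbation regions meet $\mathrm{supp}(\mu)$), at least one $\Lambda_n$ must satisfy $\mu(\Lambda_n)>0$. Picking such an $n$ with moreover $2r_n<\alpha$, and any $x\in\Lambda_n$, yields $\mu(\Gamma_\alpha(x))\geq \mu(\Lambda_n)>0$. Since $\alpha>0$ was arbitrary, $F$ is not $\mu$-expansive, proving the theorem.

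The main obstacle I expect is precisely the verification that $\mu(\Lambda_n)>0$ for some $n$: this is trivial when the test measure is Lebesgue itself, but for a genuinely absolutely continuous $F$-invariant measure it requires a short argument showing that the fattened horseshoes intersect $\mathrm{supp}(\mu)$ in a set of positive Lebesgue measure. Everything else is a transparent consequence of the set-invariance of each $\Lambda_n$ together with the shrinking-diameter property delivered by Proposition \ref{horseshoe-gordo}.
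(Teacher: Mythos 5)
Your argument follows the paper's proof essentially verbatim: chain Lemma \ref{flat-tangency}, Lemma \ref{small-horseshoes} and Proposition \ref{horseshoe-gordo} to produce $F$ with positive-Lebesgue-measure horseshoes $\Lambda_n$ of diameter $<2r_n\to 0$, then observe that $\Lambda_n\subset\Gamma_{2r_n}(x)$ for $x\in\Lambda_n$ (the paper takes $x=p_n$ a periodic point, which is immaterial) so that $\mu(\Gamma_{2r_n}(x))\geq\mu(\Lambda_n)>0$. The one step you flag as a possible obstacle --- that $\mu(\Lambda_n)>0$ for a general invariant $\mu\ll\mathrm{Leb}$ rather than for Lebesgue measure itself --- is asserted without further argument in the paper as well, so your proposal is faithful to (and no less complete than) the original.
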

\begin{proof}
Let $F:M\to M$ be the $C^1$ diffeomorphism constructed as in Proposition \ref{horseshoe-gordo} above. Then for every horseshoe $\Lambda_n$ associated to $F$ there is a hyperbolic periodic point $p_n\in \Lambda_n$ such that
$\mu(\Gamma_{2r_n}(p_n)\geq \mu(\Lambda_n))>0$ where $\mu\ll Leb$ and $f^*\mu=\mu$\,. Since $r_n\to 0$ when $n\to \infty$ the proof follows.
\end{proof}

Theorem \ref{ultimo} gives the proof of Theorem \ref{Teo D}.

\subsection*{Acknowledgements.}
J. Vieitez thanks IMPA and {\it Instituto de Matematica, Universidade Federal do Rio de Janeiro} for their kind hospitality during the preparation of this paper.

\begin{tabbing}
Universidade Federal do Rio de Janeiro, \hspace{1cm}\= Facultad de Ingenieria, \kill
 M. J. Pacifico, \> J. L. Vieitez, \\
Instituto de Matematica, \> Instituto de Matematica,\\
Universidade Federal do Rio de Janeiro, \> Facultad de Ingenieria,\\
C. P. 68.530, CEP 21.945-970, \> Universidad de la Republica,\\
Rio de Janeiro, R. J. , Brazil. \> CC30, CP 11300,\\
                                \> Montevideo, Uruguay\\
{\it pacifico@im.ufrj.br} \> {\it jvieitez@fing.edu.uy}
\end{tabbing}

\end{document}